\theoremstyle{plain}
\newtheorem{theorem}{Theorem}
\newtheorem{lemma}{Lemma}
\theoremstyle{definition}
\newtheorem{definition}{Definition}
\begin{document}
\title[On  local  linear convexity generalized  to commutative algebras]{On  local  linear convexity generalized  to commutative algebras}
\author{Tetiana M. Osipchuk}
\address{Institute of Mathematics of the National Academy of Sciences of Ukraine, Tereshchenkivska str. 3, UA-01004, Kyiv, Ukraine}
\email{osipchuk@imath.kiev.ua}
\begin{abstract}
В работе рассматривается коммутативная ассоциативная алгебра $\mathcal{A}$ над полем действительных чисел с единицей, которая обладает базисом $\{\boldsymbol{e}_k\}_{k=1}^{m}$, таким, что все его элементы $\boldsymbol{e}_k$ являются оборотными и среди матриц $\Gamma^p=(\gamma_{lk}^p)$, $p=\overline{1,m}$, где $\gamma_{lk}^p$ --- структурные константы $\mathcal{A}$ (то есть, $\boldsymbol{e}_l\boldsymbol{e}_k=\sum_{p=1}^{m}\gamma_{lk}^p\boldsymbol{e}_p$, $l,k=\overline{1,m}$),  существует по крайней мере одна невырожденная. Понятие линейно выпуклых областей в многомерном комплексном пространстве и некоторые их свойства обобщены на пространство $\mathcal{A}^n$, которое является  декартовым произведением $n$ алгебр $\mathcal{A}$.  А именно, получены необходимое и отдельно достаточное условия  $\mathcal{A}$-линейной выпуклости областей с гладкой границей в пространстве  $\mathcal{A}^n$ в терминах неотрицательности и соответственно положительности формальной квадратичной дифференциальной формы в  $\mathcal{A}$.
\vskip 2mm
\noindent 
 A commutative associative algebra $\mathcal{A}$ with an identity  over the field of real numbers which  has a basis $\{\boldsymbol{e}_k\}_{k=1}^{m}$, where all elements $\boldsymbol{e}_k$ are  invertible, is considered in the work. Moreover, among matrixes $\Gamma^p=(\gamma_{lk}^p)$, $p=\overline{1,m}$, consisting of the structure constants $\gamma_{lk}^p$ of $\mathcal{A}$, defined as $\boldsymbol{e}_l\boldsymbol{e}_k=\sum_{p=1}^{m}\gamma_{lk}^p\boldsymbol{e}_p$, $l,k=\overline{1,m}$,  there is at least one that is non-degenerate. The notion of linearly convex domains in the multi-dimensional complex space and some of their properties are generalized to the space $\mathcal{A}^n$ that is the Cartesian product of $n$ algebras $\mathcal{A}$.  Namely, the separate necessary and sufficient conditions of the local $\mathcal{A}$-linear convexity of domains with smooth boundary  in  $\mathcal{A}^n$ are obtained in terms of nonnegativity and positivity of  formal quadratic differential form in $\mathcal{A}$, respectively.
\end{abstract}

\keywords{Convex set, linearly convex set, commutative algebra, linear form, quadratic form, differential forms, formal derivatives,  multi-dimensional Euclidean space.}
\subjclass{32F17, 52A30}

\maketitle
\section{Introduction}
The notion of linear convexity that  is studied in the theory of functions of many complex variables was coined in 1935 by H. Behnke and E. Peschl \cite{Ben1935}, but it has been actively used only since the 60s due to the works of A. Martineau \cite{Mar1966} and L. Aizenberg \cite{Aiz1967}, \cite{Aiz1967_2} who defined a linearly convex set in $n$-dimensional complex space $\mathbb{C}^n$ independently in slightly different ways. Here we give the Aizenberg's definition, since we take it as the basis. Hereinafter, a neighborhood of a point in a vector space is an open ball centered at this point.

\begin{definition}\label{def1}  (L. Aizenberg \cite{Aiz1967}) A domain
$D\subset\mathbb{C}^n$ is said to be \emph{\textbf{locally linearly convex}} if for every  boundary point
$w=(w_1,w_2\ldots ,w_n)\in\partial D$ there is a complex
hyperplane
\begin{multline*}
\Pi_{\mathbb{C}}(w):=\left\{z=(z_1,z_2,\ldots ,z_n)\in\mathbb{C}^n: \sum\limits_{j=1}^nc_j\left(z_j-w_j\right)=0,\right.\\
\Bigg.(c_1,c_2,\ldots ,c_n)\in\mathbb{C}^n\setminus\{0\}, \Bigg\}
\end{multline*}
 passing through $w$
but not intersecting $D$ in some neighborhood of the point $w$. If $\Pi_{\mathbb{C}}(w)\cap D=\varnothing$, then $D$
is said to be \emph{\textbf{(globally) linearly convex}}.
\end{definition}

However, H. Behnke and E. Peschl in \cite{Ben1935} considered linearly convex sets only in the complex plane   $\mathbb{C}^2$. They proved that global linear convexity follows the local one for bounded domains with a smooth boundary in $\mathbb{C}^2$.  For the case of $\mathbb{C}^n$ this result was obtained in 1971 by A. Yuzhakov and V. Kryvokolesko \cite{YuK1971}. Besides, in the work \cite{Ben1935}, the  analytical conditions of local linear convexity of domains with a smooth boundary in $\mathbb{C}^2$ (Behnke-Peschl conditions) were obtained.

In 1971 B. Zinoviev got the following generalization of Behnke-Peschl conditions for the case
$\mathbb{C}^n$, $n\ge 2$ \cite{Zin1971}.
Let domain
\begin{equation} \label{d}D=\{z=(z_1,\ldots, z_n)\in\mathbb{C}^n: \varphi(z)=\varphi(z,\bar{z})<0\}\end{equation}
be defined by the function
$\varphi(z):\mathbb{C}^{n}\to\mathbb{R}$, where $\varphi\in C^2$ in a neighborhood of the boundary $\partial D=\{z\in\mathbb{C}^n: \varphi(z)=0\}$  of $D$ and $\mathrm{grad}\varphi\ne 0$ everywhere on $\partial D$.
Then the following theorem is true.

\begin{theorem}\label{Zin} \textup{(\cite{Zin1971})} If a domain
$D$ is
 locally linearly convex, then for every boundary point $w\in\partial
 D$ and for all vectors $s=(s_1,s_2,\ldots ,s_n)\in\mathbb{C}^n$, $\|s\|=1$,  such that
$$
\sum\limits_{j=1}^{n}\frac{\partial\varphi (w)}{\partial
z_j}s_j=0
$$
the following inequality is true
\begin{equation} \label{1}
\sum\limits_{j,k=1}^{2n}\frac{\partial^2\varphi (w)}{\partial
z_j\partial z_k}s_js_k\ge 0,\quad\mbox{where}\quad
z_{n+j}=\bar{z}_j,\,\,s_{n+j}=\bar{s}_j, \,\, j=\overline{1,n}.
\end{equation}
If for every boundary point $w\in\partial D$ and for the same
vectors $s$
\begin{equation} \label{2}
\sum\limits_{j,k=1}^{2n}\frac{\partial^2\varphi (w)}{\partial
z_j\partial z_k}s_js_k> 0,\quad\mbox{where}\quad
z_{n+j}=\bar{z}_j,\,\,s_{n+j}=\bar{s}_j, \,\, j=\overline{1,n},
\end{equation}
then domain $D$ is locally linearly convex.
\end{theorem}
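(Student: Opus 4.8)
The plan is to reduce both implications to the second-order Taylor expansion of $\varphi$ along the \emph{complex tangent hyperplane} at a boundary point $w$, namely
\[
\Pi_{\mathbb{C}}(w)=\left\{z\in\mathbb{C}^n:\sum_{j=1}^n\frac{\partial\varphi(w)}{\partial z_j}(z_j-w_j)=0\right\}.
\]
The constraint $\sum_{j=1}^n(\partial\varphi(w)/\partial z_j)s_j=0$ imposed on the vectors $s$ in the statement is precisely the condition that $s$ be a direction lying in $\Pi_{\mathbb{C}}(w)$. Since $w\in\partial D$ gives $\varphi(w)=0$, and since $\varphi$ is real-valued so that $\partial\varphi/\partial\bar z_j=\overline{\partial\varphi/\partial z_j}$, the first-order term $2\,\mathrm{Re}\sum_j(\partial\varphi(w)/\partial z_j)s_j$ vanishes for such $s$. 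Hence for small $t\in\mathbb{R}$,
\[
\varphi(w+ts)=\tfrac{1}{2}t^2\sum_{j,k=1}^{2n}\frac{\partial^2\varphi(w)}{\partial z_j\partial z_k}s_js_k+o(t^2),\qquad z_{n+j}=\bar z_j,\ s_{n+j}=\bar s_j.
\]
The whole scheme rests on relating the sign of this form $Q(s)$ to whether $\Pi_{\mathbb{C}}(w)$ locally avoids $D=\{\varphi<0\}$.

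For sufficiency, I would assume $Q(s)>0$ for every unit $s$ in the complex tangent direction. By compactness of the unit sphere inside the hyperplane $\{s:\sum_j(\partial\varphi(w)/\partial z_j)s_j=0\}$ there is $c>0$ with $Q(s)\ge c$ for all such unit $s$; and since $\varphi\in C^2$, the remainder above is $o(t^2)$ uniformly in the direction. Therefore $\varphi(w+ts)\ge\tfrac12 ct^2+o(t^2)>0$ for all sufficiently small $t\ne0$, uniformly in $s$, so $\Pi_{\mathbb{C}}(w)$ lies in $\{\varphi>0\}$ in a punctured neighborhood of $w$ and does not intersect $D$ there. As $\Pi_{\mathbb{C}}(w)$ passes through $w$, this furnishes the supporting complex hyperplane required by Definition~\ref{def1}, giving local linear convexity.

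For necessity, the main point — and the step I expect to be the real obstacle — is to show that local linear convexity forces the supporting complex hyperplane at $w$ to be \emph{exactly} $\Pi_{\mathbb{C}}(w)$, not merely some complex hyperplane through $w$. I would prove this as a geometric lemma: if $\Pi(w)=\{z:\sum_j c_j(z_j-w_j)=0\}$ avoids $D$ near $w$, then for every direction $s$ of $\Pi(w)$ one has $\varphi(w+ts)\ge0$ for all small real $t$; expanding to first order gives $2t\,\mathrm{Re}\sum_j(\partial\varphi(w)/\partial z_j)s_j+O(t^2)\ge0$ for both signs of $t$, which forces $\mathrm{Re}\sum_j(\partial\varphi(w)/\partial z_j)s_j=0$. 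Replacing $s$ by $is$, which is again a direction of $\Pi(w)$ because its direction set is a complex subspace, kills the imaginary part as well, so $\sum_j(\partial\varphi(w)/\partial z_j)s_j=0$ for every direction $s$ of $\Pi(w)$; as both $\Pi(w)$ and $\Pi_{\mathbb{C}}(w)$ are complex hyperplanes, this yields $\Pi(w)=\Pi_{\mathbb{C}}(w)$.

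Once this identification is in hand, necessity is immediate: local linear convexity means $\Pi_{\mathbb{C}}(w)$ avoids $D$ near $w$, so $\varphi(w+ts)\ge0$ for all small real $t$ and all complex-tangent directions $s$; dividing the second-order expansion by $t^2$ and letting $t\to0$ gives $Q(s)\ge0$, which is inequality~\eqref{1}. The essential subtlety throughout is the interplay between the real tangent hyperplane to $\partial D$ and its maximal complex subspace: only the latter can serve as a locally supporting complex hyperplane, and it is exactly this reduction that links Aizenberg's purely geometric Definition~\ref{def1} to the analytic conditions~\eqref{1}--\eqref{2}.
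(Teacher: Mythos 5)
Your proof is correct, but note that the paper does not actually prove Theorem \ref{Zin}: it is quoted from Zinoviev \cite{Zin1971} as background, and the only proof the paper contains is that of Theorem \ref{clif}, its generalization to $\mathcal{A}^n$. Comparing against that proof is instructive. The Taylor-expansion core is the same in both: expand the defining function at a boundary point, kill the first-order term via the tangency condition (the paper's (\ref{Re})), and read off the sign of the quadratic form; your sufficiency argument matches the paper's, and in fact handles explicitly (via compactness of the unit sphere in the tangent directions and uniformity of the $C^2$ remainder) a point the paper leaves implicit in the $o(\|\boldsymbol{z}-\boldsymbol{w}\|^2)$ notation of (\ref{6}). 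The genuine difference is in necessity. You prove an identification lemma: any complex hyperplane locally supporting $D$ at $w$ must coincide with the complex tangent hyperplane $\Pi_{\mathbb{C}}(w)$, via the two-sided first-order argument together with the substitution $s\mapsto is$, legitimate because the direction set of a complex hyperplane is a complex subspace. The paper never proves the $\mathcal{A}$-analogue of this lemma; instead, Theorem \ref{clif} builds it into the hypotheses (``\ldots and $T_{\mathcal{A}}(\boldsymbol{w})$ is locally supporting for $\Omega$ at any point $\boldsymbol{w}\in\partial\Omega$\ldots''), so its necessity direction is merely a contrapositive reading of the expansion (\ref{6}). You singled this step out as the real obstacle, and that judgement is borne out by the paper: it is precisely the step that does not survive the passage from $\mathbb{C}$ to a general commutative algebra $\mathcal{A}$, where no analogue of multiplication by $i$ is available to force a supporting $\mathcal{A}$-hyperplane to be the tangent one. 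Thus your argument establishes the full Zinoviev statement, whereas the paper's method, transplanted back to $\mathbb{C}^n$, would only yield the weaker statement with the supporting-tangent-hyperplane assumption added.
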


Similar conditions were obtained for the algebra of real quaternions \cite{Osi2006}, the algebra of real generalized quaternions \cite{Osi2013}, and Clifford algebras \cite{Osi2010}.  Let us notice here that the listed algebras are noncommutative. Professor A. Pogoruy reviewing these results proposed to obtain similar conditions for arbitrary commutative algebra. The purpose of the present work is to obtain necessary and sufficient conditions of generalized local linear convexity for a commutative associative algebra $\mathcal{A}$  over the field of real numbers with an identity and with some conditions imposed on its basis which are described in chapter 2. In that chapter real linear and quadratic forms are  presented in terms of  algebra $\mathcal{A}$ numbers and the generalization of the complex formal partial derivatives to algebra $\mathcal{A}$ is obtained. In chapter 3 the notion of linear convexity  and the conditions of local linear convexity (\ref{1})-(\ref{2}) are generalized to the space $\mathcal{A}^n$ that is the Cartesian product of $n$ algebras $\mathcal{A}$.

\section{Real linear and quadratic forms in commutative algebras}

In what follows, unless otherwise is specified, an $m\times n$ matrix of elements $a_{ij}\in\mathbb{R}$, $i=\overline{1,n}$, $j=\overline{1,n}$, will be denoted as $(a_{ij})$ and its determinant as $\mathrm{det}\,(a_{ij})$.
Let $\mathcal{A}$ be a commutative and associative algebra  over the field of real numbers $\mathbb{R}$ with identity $\boldsymbol{e}$. We identify $\boldsymbol{e}$ with $1$. Let $\mathrm{dim} \mathcal{A}=m$, elements $\{\boldsymbol{e}_k\}_{k=0}^{m-1}$ be a basis of $\mathcal{A}$, and $\gamma_{lk}^p\in\mathbb{R}$ be structure constants $\gamma_{lk}^p\in\mathbb{R}$ of $\mathcal{A}$  defined as follows:
\begin{equation}\label{struct}
\boldsymbol{e}_l\boldsymbol{e}_k=\sum\limits_{p=0}^{m-1}\gamma_{lk}^p\boldsymbol{e}_p\quad l,k=\overline{0,m-1}.
\end{equation}
Then each element $\boldsymbol{x}\in \mathcal{A}$ can be presented as 
\begin{equation}\label{struct2}
\boldsymbol{x}=\sum\limits_{q=0}^{m-1}x_q\boldsymbol{e}_q, \quad x_q\in\mathbb{R}.
\end{equation}
Numbers of algebra $\mathcal{A}$ will be denoted by small Latin letters in bold and the real numbers will be denoted by small Latin or Greek letters in normal font. Since a basis of some algebras includes identity and the other elements of the basis are denoted as $\boldsymbol{e}_1$, $\boldsymbol{e}_2$, etc., it is convenient to denote the identity as $\boldsymbol{e}_0$. So hereinafter, we start the numeration of the basis decomposition of $\mathcal{A}$  numbers  from zero. Such a numeration of the basis decomposition requires starting the numeration of the elements of matrixes and other objects within this paper from zero too.

Let the basis satisfy the following conditions:
\begin{enumerate}
\item[1)] there exist the inverse elements $\boldsymbol{e}_k^{-1}=\dfrac{1}{\boldsymbol{e}_k}$, $k=\overline{0,m-1}$.
\item [2)] among the matrixes $\Gamma^p=(\gamma_{lk}^p)$, $p=\overline{0,m-1}$ there is at least one that is non-degenerate.
\end{enumerate}
The author does not know whether  it is possible for any commutative and associative algebra  over $\mathbb{R}$ with an identity to choose a basis that simultaneously satisfies conditions 1) and 2).

Let us consider $n$-dimensional vector space
$$
\mathcal{A}^n:=\underbrace{\mathcal{A}\times
\mathcal{A}\times\ldots\times
\mathcal{A}}\limits_n
$$
with elements $\boldsymbol{z}=(\boldsymbol{z}_1,\boldsymbol{z}_2,\ldots,\boldsymbol{z}_n)\in \mathcal{A}^n$, where
\begin{equation}\label{eque5}
\boldsymbol{z}_j:=\sum\limits_{q=0}^{m-1}x^j_q\boldsymbol{e}_q\in
\mathcal{A},\quad x^j_q\in \mathbb{R},\,\,j=\overline{1,n}.
\end{equation}
We identify points (vectors) $\boldsymbol{z}\in \mathcal{A}^n$ with points (vectors) $z=(x^1_0,x^1_1,\ldots, x^n_{m-1})\in\mathbb{R}^{mn}$. Herewith, the elements of the space $\mathcal{A}^n$ are in a bold font and the elements of the space $\mathbb{R}^{mn}$ are in normal font.
Let
$$
\|\boldsymbol{z}\|=\sqrt{\sum\limits_{j=1}^n\sum\limits_{q=0}^{m-1}\left|x_q^j\right|^2}.
$$
Consider the following matrixes
$$
\boldsymbol{E}=\begin{pmatrix}
\boldsymbol{e}_0&0&\hdotsfor{1}&0 \\
0&\boldsymbol{e}_1&\hdotsfor{1}&0 \\
\vdots& \vdots &\ddots & \vdots  \\
0&0&\hdotsfor{1}&\boldsymbol{e}_{m-1} \\
\end{pmatrix},\quad
X^j=\left(\begin{array}c x^j_0\\ x^j_1 \\ \ldots\\ x^j_{m-1} \end{array}\right),\,\,j=\overline{1,n},
$$
and a non degenerate $m\times m$ matrix
$$
\Gamma=\begin{pmatrix}
1&1&\hdotsfor{1}&1 \\
\gamma_{10}&\gamma_{11}&\hdotsfor{1}&\gamma_{1(m-1)}\\
\vdots& \vdots &\ddots & \vdots  \\
\gamma_{(m-1)0}&\gamma_{(m-1)1}&\hdotsfor{1}&\gamma_{(m-1)(m-1)}
\end{pmatrix},\,\,\mbox{where}\,\,\gamma_{lq}\in\mathbb{R}.
$$
And let
\begin{equation}\label{eque6}
\boldsymbol{Z}_j=\Gamma \boldsymbol{E}X^j,
\end{equation}
where
$$
\boldsymbol{Z}_j=\left(\begin{array}c \boldsymbol{z}^{\boldsymbol{0}}_j\\ \boldsymbol{z}^{\boldsymbol{1}}_j \\ \ldots\\ \boldsymbol{z}^{\boldsymbol{m-1}}_j \end{array}\right),\,\,j=\overline{1,n}.
$$
Thus,
$$
\boldsymbol{z}_j^{\boldsymbol{l}}=\sum\limits_{q=0}^{m-1}\gamma_{lq}x^j_q\boldsymbol{e}_q,\,\,j=\overline{1,n},\quad \mbox{where} \,\, \gamma_{lq}=1 \,\, \mbox{as}\,\,l=0.
$$
From now on, for any number $\boldsymbol{x}\in\mathcal{A}$ the numbers $\boldsymbol{x}^{\boldsymbol{l}}\in\mathcal{A}$, $l=\overline{0,m-1}$, with upper index $l$ in bold are obtained from  $\boldsymbol{x}$ by multiplying the elements of  $l$th row of matrix $\Gamma$ by the respective  summands  $x_q\boldsymbol{e}_q$ in the basis decomposition (\ref{struct2}) of  $\boldsymbol{x}$. As we can see, $\boldsymbol{x}^{\boldsymbol{0}}=\boldsymbol{x}$.

We obtain from (\ref{eque6}):
$$
X^j=\boldsymbol{E}^{-1}\Gamma^{-1}\boldsymbol{Z}_j,
$$
where $\Gamma^{-1}=(\eta_{lp})$, $\eta_{lp}\in\mathbb{R}$, $l,p=\overline{0,m-1}$. That is to say,
$$
\left(\begin{array}c x^j_0\\ x^j_1 \\ \ldots\\ x^j_{m-1} \end{array}\right)=
\begin{pmatrix}
\boldsymbol{e}^{-1}_0&0&\hdotsfor{1}&0 \\
0&\boldsymbol{e}_1^{-1}&\hdotsfor{1}&0 \\
\vdots& \vdots &\ddots & \vdots  \\
0&0&\hdotsfor{1}&\boldsymbol{e}_{m-1}^{-1}
\end{pmatrix}
\begin{pmatrix}
\eta_{00}&\eta_{01}&\hdotsfor{1}&\eta_{0(m-1)} \\
\eta_{10}&\eta_{11}&\hdotsfor{1}&\eta_{1(m-1)}\\
\vdots& \vdots &\ddots & \vdots  \\
\eta_{(m-1)0}&\eta_{(m-1)1}&\hdotsfor{1}&\eta_{(m-1)(m-1)}
\end{pmatrix}
\left(\begin{array}c \boldsymbol{z}^{\boldsymbol{0}}_j\\ \boldsymbol{z}^{\boldsymbol{1}}_j \\ \ldots\\ \boldsymbol{z}^{\boldsymbol{m-1}}_j \end{array}\right).
$$
Hence
\begin{equation}\label{x}
x^j_l=\boldsymbol{e}^{-1}_l\sum\limits_{p=0}^{m-1}\eta_{lp}\boldsymbol{z}_j^{\boldsymbol{p}},\,\, j=\overline{1,n},\,\, l=\overline{0,m-1}.
\end{equation}

Consider a real linear form
$$
\sum\limits_{j=1}^n\sum\limits_{l=0}^{m-1}a_l^jx_l^j,
$$
where $a_l^j\in\mathbb{R}$, $a_l^j=\mathrm{const}$,
$j=\overline{1,n}$, $l=\overline{0,m-1}$.
Substitute $x_l^j$ for their expressions from (\ref{x}) and group together the respective components with  $\boldsymbol{z}_j^{\boldsymbol{p}}$, $j=\overline{1,n}$\,,
$l=\overline{0,m-1}$ fixing $j$ and $p$. Then we obtain
$$
\sum\limits_{j=1}^n\sum\limits_{l=0}^{m-1}a_l^jx_l^j=
\sum\limits_{j=1}^n\sum\limits_{l=0}^{m-1}a_l^j\boldsymbol{e}^{-1}_l\sum\limits_{p=0}^{m-1}\eta_{lp}\boldsymbol{z}_j^{\boldsymbol{p}}=
\sum\limits_{j=1}^n\sum\limits_{p=0}^{m-1}\boldsymbol{z}_j^{\boldsymbol{p}}\sum\limits_{l=0}^{m-1}\eta_{lp}a_l^j\boldsymbol{e}^{-1}_l=
\sum\limits_{j=1}^n\sum\limits_{p=0}^{m-1}\boldsymbol{z}_j^{\boldsymbol{p}}\boldsymbol{a}_j^p,
$$
where
\begin{equation}\label{Ajp}
\boldsymbol{a}_j^p=\sum\limits_{l=0}^{m-1}\eta_{lp}a_l^j\boldsymbol{e}^{-1}_l, \,\, j=\overline{1,n},\,\,   p=\overline{0,m-1}.
\end{equation}
Let us rewrite the expression of $\boldsymbol{a}_j^p$ in terms of indexes $i$, $q$.
$$
\boldsymbol{a}_i^q=\sum\limits_{k=0}^{m-1}\eta_{kq}a_k^i\boldsymbol{e}^{-1}_k, \,\, i=\overline{1,n},\,\,   q=\overline{0,m-1}.
$$

Now we consider a real quadratic form
$$
\sum\limits_{j,i=1}^n\sum\limits_{l,k=0}^{m-1}
a_{lk}^{ji}x_l^jx_k^i,
$$
where $a_{lk}^{ji}\in\mathbb{R}$ are the elements of symmetric $nm\times nm$ matrix
\begin{equation}\label{Clmatr}
\left(a_{lk}^{ji}\right), \quad a_{lk}^{ji}=a_{kl}^{ij},\,\,
j,i=\overline{1,n}, \,\, k,l=\overline{0,m-1}.
\end{equation}
This matrix is presented as follows:
$$
\left(a_{lk}^{ji}\right)=
\begin{pmatrix}
A^{11}&A^{12}&\hdotsfor{1}&A^{1n} \\
A^{21}&A^{22}&\hdotsfor{1}&A^{2n} \\
\vdots& \vdots &\ddots & \vdots \\
A^{n1}&A^{n2}&\hdotsfor{1}&A^{nn} \\
\end{pmatrix},$$
where
$$
A^{ji}=\begin{pmatrix}
a_{00}^{ji}& a_{01}^{ji} & \hdotsfor{1} & a_{0(m-1)}^{ji} \\
a_{10}^{ji}& a_{11}^{ji} & \hdotsfor{1} & a_{1(m-1)}^{ji} \\
\vdots& \vdots &\ddots & \vdots \\
a_{(m-1)0}^{ji}& a_{(m-1)1}^{ji} & \hdotsfor{1} & a_{(m-1)(m-1)}^{ji} \\
\end{pmatrix},\,\,\, i,j=\overline{1,n}.
$$
Multiplying $\boldsymbol{a}_j^p$ with $\boldsymbol{a}_i^q$ and replacing products $a_l^ja_k^i$ by the elements $a_{lk}^{ji}$ of matrix (\ref{Clmatr}) we get the following numbers of algebra $\mathcal{A}$:
\begin{equation}\label{Aji}
\boldsymbol{a}_{ji}^{pq}=
\sum\limits_{l,k=0}^{m-1}\eta_{lp}\eta_{kq}a_{lk}^{ji}\boldsymbol{e}^{-1}_l\boldsymbol{e}^{-1}_k,
\,\, j,i=\overline{1,n},\,\,   p,q=\overline{0,m-1}.
\end{equation}
Then the quadratic  form can be expressed  in terms of numbers $\boldsymbol{z}_j^{\boldsymbol{p}}$, $\boldsymbol{z}_i^{\boldsymbol{q}}$,   $\boldsymbol{a}_{ji}^{pq}$ as follows:
\begin{multline*}
\sum\limits_{j,i=1}^n\sum\limits_{l,k=0}^{m-1}
a_{lk}^{ji}x_l^jx_k^i=\sum\limits_{j,i=1}^n\sum\limits_{l,k=0}^{m-1}
a_{lk}^{ji}\left(\boldsymbol{e}^{-1}_l\sum\limits_{p=0}^{m-1}\eta_{lp}\boldsymbol{z}_j^{\boldsymbol{p}}\right)x_k^i=\\
=\sum\limits_{j,i=1}^n\sum\limits_{l,k=0}^{m-1}a_{lk}^{ji}\boldsymbol{e}^{-1}_l\sum\limits_{p=0}^{m-1}\eta_{lp}x_k^i\boldsymbol{z}_j^{\boldsymbol{p}}=
\sum\limits_{j,i=1}^n\sum\limits_{l,k=0}^{m-1}a_{lk}^{ji}\boldsymbol{e}^{-1}_l\sum\limits_{p=0}^{m-1}\eta_{lp}\left(\boldsymbol{e}^{-1}_k
\sum\limits_{q=0}^{m-1}\eta_{kq}\boldsymbol{z}_i^{\boldsymbol{q}}\right)\boldsymbol{z}_j^{\boldsymbol{p}}=\\
=\sum\limits_{j,i=1}^n\sum\limits_{p,q=0}^{m-1}\sum\limits_{l,k=0}^{m-1}\eta_{lp}\eta_{kq}a_{lk}^{ji}\boldsymbol{e}^{-1}_l\boldsymbol{e}^{-1}_k\boldsymbol{z}_i^{\boldsymbol{q}}\boldsymbol{z}_j^{\boldsymbol{p}}=
\sum\limits_{j,i=1}^n\sum\limits_{p,q=0}^{m-1}\boldsymbol{a}_{ji}^{pq}\boldsymbol{z}_i^{\boldsymbol{q}}\boldsymbol{z}_j^{\boldsymbol{p}}.
\end{multline*}

Let $\rho(\boldsymbol{z})=\rho(z): \mathbb{R}^{mn}\rightarrow\mathbb{R}$ have continuous partial derivatives of the first and the second order at a point $w\in\mathbb{R}^{mn}$. Then function $\rho(z)$ is twice continuously differentiable at the point $w$  and its full differentials of the first and the second order are defined as follows:
$$
d\rho(w)=\sum\limits_{j=1}^n
\sum\limits_{l=0}^{m-1}\frac{\partial\rho(w)}{\partial
x^j_l}\,dx^j_l,\qquad
d^2\rho(w)=\sum\limits_{j,i=1}^n\sum\limits_{l,k=0}^{m-1}\frac{\partial^2\rho({w})}{{\partial
x^i_k\partial x^j_l}}\,dx^j_l\,dx^i_k.
$$
We present $d\rho(w)$, $d^2\rho(w)$ in terms of the elements of algebra $\mathcal{A}$. Let
$$
d\boldsymbol{z}_j^{\boldsymbol{p}}:=\sum\limits_{l=0}^{m-1}\gamma_{pl}dx^j_l\boldsymbol{e}_l,\quad \mbox{where} \,\, \gamma_{pl}=1 \,\, \mbox{as}\,\,p=0.
$$
Let $a_l^j=\dfrac{\partial\rho(w)}{\partial x_{l}^j}$, $\boldsymbol{a}_j^p=\dfrac{\partial\rho(\boldsymbol{w})}{\partial
\boldsymbol{z}^{\boldsymbol{p}}_j}$ in (\ref{Ajp}) and $a_{lk}^{ji}=\dfrac{\partial^2\rho(w)}{{\partial x^j_l\partial x^i_k}}$, $\boldsymbol{a}_{ji}^{pq}=\dfrac{\partial^2\rho(\boldsymbol{w})}{{\partial \boldsymbol{z}_j^{\boldsymbol{p}}\partial
\boldsymbol{z}_i^{\boldsymbol{q}}}}$ in (\ref{Aji}), $p,q=\overline{0,m-1}$. Then
\begin{equation}\label{L11}
\dfrac{\partial\rho(\boldsymbol{w})}{\partial
\boldsymbol{z}^{\boldsymbol{p}}_j}:=\sum\limits_{l=0}^{m-1}\eta_{lp}\dfrac{\partial\rho(w)}{\partial x_{l}^j}\boldsymbol{e}^{-1}_l, \,\, j=\overline{1,n},\,\,   p=\overline{0,m-1},
\end{equation}
\begin{equation}\label{L111}
\frac{\partial^2\rho(\boldsymbol{w})}{{\partial \boldsymbol{z}_j^{\boldsymbol{p}}\partial
\boldsymbol{z}_i^{\boldsymbol{q}}}}:=
\sum\limits_{l,k=0}^{m-1}\eta_{lp}\eta_{kq}\frac{\partial^2\rho(w)}{{\partial x^j_l\partial x^i_k}}\boldsymbol{e}^{-1}_l\boldsymbol{e}^{-1}_k,
\,\, j,i=\overline{1,n},\,\,   p,q=\overline{0,m-1}.
\end{equation}
And
\begin{equation}\label{L1}
d\rho(\boldsymbol{w})=\sum\limits_{j=1}^n
\sum\limits_{p=0}^{m-1}\frac{\partial\rho(\boldsymbol{w})}{\partial
\boldsymbol{z}_j^{\boldsymbol{p}}}\,d\boldsymbol{z}_j^{\boldsymbol{p}},\qquad
d^2\rho(\boldsymbol{w})=\sum\limits_{j,i=1}^n\sum\limits_{q,p=0}^{m-1}\frac{\partial^2\rho(\boldsymbol{w})}{{\partial \boldsymbol{z}_j^{\boldsymbol{p}}\partial
\boldsymbol{z}_i^{\boldsymbol{q}}}}\,d\boldsymbol{z}_j^{\boldsymbol{p}}\,d\boldsymbol{z}_i^{\boldsymbol{q}}.
\end{equation}

On the other hand,  substitute the values of  $x^j_l$, $j=\overline{1,n}$, $l=\overline{0,m-1}$, from (\ref{x}) in the expression of function $\rho(z)=\rho(x^1_0,x^1_1,\ldots, x^n_{m-1})$. We obtain $\rho(z)=\rho(x^1_0(\boldsymbol{z}_1^{\boldsymbol{0}},\boldsymbol{z}_1^{\boldsymbol{1}}\ldots,\boldsymbol{z}_1^{\boldsymbol{m-1}}),x^1_1
(\boldsymbol{z}_1^{\boldsymbol{0}},\boldsymbol{z}_1^{\boldsymbol{1}}\ldots,\boldsymbol{z}_1^{\boldsymbol{m-1}}),\ldots, x^n_{m-1}(\boldsymbol{z}_n^{\boldsymbol{0}},\boldsymbol{z}_n^{\boldsymbol{1}}\ldots,\boldsymbol{z}_n^{\boldsymbol{m-1}}))$. Formally differentiating  function $\rho$ as a composite function with respect to variables $\boldsymbol{z}_j^{\boldsymbol{l}}$, $j=\overline{1,n}$, $l=\overline{0,m-1}$, we also obtain formulas (\ref{L11}), (\ref{L111}) for the formal partial derivatives $\dfrac{\partial\rho(\boldsymbol{w})}{\partial
\boldsymbol{z}^{\boldsymbol{p}}_j}$, $\dfrac{\partial^2\rho(\boldsymbol{w})}{{\partial \boldsymbol{z}_j^{\boldsymbol{p}}\partial
\boldsymbol{z}_i^{\boldsymbol{q}}}}$.

In the case when $\mathrm{dim}\,\mathcal{ A}=2^k$, $k\in \mathbb{N}$, it is possible to fit matrix $\Gamma$ such that  $|\gamma_{lq}|=1$ and $\Gamma^{-1}=\dfrac{1}{2^k}\Gamma$:
\begin{equation*}\label{Mx}
\begin{array}{l}
\Gamma_{1}=\left(\begin{array}{rr}
1&1 \\
1&-1
\end{array}\right),\,\,
\Gamma_{2}=\left(\begin{array}{rr}
\Gamma_1&\Gamma_1 \\
\Gamma_1&-\Gamma_1
\end{array}\right),\,\,\ldots \\ \\ \ldots,
\Gamma=\Gamma_{k}=\begin{pmatrix}
\Gamma_{k-1}&\Gamma_{k-1} \\
\Gamma_{k-1}&-\Gamma_{k-1}
\end{pmatrix}=
\begin{pmatrix}
1&1&\hdotsfor{1}&1&1 \\
1&-1&\hdotsfor{1}&1&-1 \\
\vdots& \vdots &\ddots & \vdots & \vdots \\
1&1&\hdotsfor{1}&(-1)^{k-1}&(-1)^{k-1} \\
1&-1&\hdotsfor{1}&(-1)^{k-1}&(-1)^k \\
\end{pmatrix}.
\end{array}
\end{equation*}

Thus, it is not difficult to see that matrix $\Gamma_1$ corresponds to the case of algebra of complex numbers $\mathbb{C}$ \cite{Zin1971} and formula (\ref{L11}) gives a generalization of well known complex formal derivatives $\dfrac{\partial\varphi}{\partial z}$, $\dfrac{\partial\varphi}{\partial \bar{z}}$, $z\in\mathbb{C}$,  to the algebra $\mathcal{A}$.

\section{Generalized linear convexity}
Let $\boldsymbol{s}_j=\sum\limits_{l=0}^{m-1}s_{l}^j\boldsymbol{e}_l$, $j=\overline{1,n}$. We say that a hyperplane
\begin{multline}\label{eque2}
\Pi_{\mathcal{A}}:=\left\{\boldsymbol{s}=(\boldsymbol{s}_1,\boldsymbol{s}_2,\ldots,\boldsymbol{s}_n)\in \mathcal{A}^n:
\sum\limits_{j=1}^n\boldsymbol{c}_j\boldsymbol{s}_j=\boldsymbol{0},\right.\\
\Bigg.(\boldsymbol{c}_1,\boldsymbol{c}_2,\ldots ,\boldsymbol{c}_n)\in\mathcal{A}^n\setminus\{\boldsymbol{0}\}\Bigg\},
\end{multline}
\emph{\textbf{lies in a real hyperplane}}
\begin{multline}\label{eque3}
\Pi_{\mathbb{R}}:=\left\{\left(s^1_{0},s^1_{1},\ldots ,s^n_{(m-1)}\right)\in \mathbb{R}^{mn}:
\sum\limits_{j=1}^n\sum\limits_{l=0}^{m-1}a_{l}^js_{l}^j=0,\right.\\
\Bigg.\left(a^1_{0},a^1_{1},\ldots ,a^n_{(m-1)}\right)\in\mathbb{R}^{mn}\setminus\{0\}\Bigg\},
\end{multline}
if any vector $\boldsymbol{s}$ satisfying the equation of the hyperplane (\ref{eque2}) satisfies the equation of the hyperplane (\ref{eque3}).
\begin{lemma}\label{realhyper}
For any real hyperplane $\Pi_{\mathbb{R}}$ (\ref{eque3}), the hyperplane $\Pi_{\mathcal{A}}$ such that $\boldsymbol{c}_j=\sum\limits_{k,l=0}^{m-1}\eta_{kl}^{\tilde{p}}a^j_{l}\boldsymbol{e}_k$, $j=\overline{1,n}$, where  $\eta_{kl}^{\tilde{p}}$ are the elements of matrix inverse to the matrix $(\gamma_{lk}^{\tilde{p}})$ satisfying condition 2), lies in $\Pi_{\mathbb{R}}$.
\end{lemma}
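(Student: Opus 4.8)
The plan is to verify directly the defining implication of ``$\Pi_{\mathcal{A}}$ lies in $\Pi_{\mathbb{R}}$'': that every $\boldsymbol{s}=(\boldsymbol{s}_1,\ldots,\boldsymbol{s}_n)$ solving $\sum_{j=1}^n\boldsymbol{c}_j\boldsymbol{s}_j=\boldsymbol{0}$ also solves $\sum_{j=1}^n\sum_{l=0}^{m-1}a_l^js_l^j=0$. First I would substitute the prescribed coefficients $\boldsymbol{c}_j=\sum_{k,r=0}^{m-1}\eta_{kr}^{\tilde{p}}a_r^j\boldsymbol{e}_k$ (renaming the second summation index of the lemma to $r$) together with $\boldsymbol{s}_j=\sum_{l=0}^{m-1}s_l^j\boldsymbol{e}_l$ into the product $\boldsymbol{c}_j\boldsymbol{s}_j$, expand each $\boldsymbol{e}_k\boldsymbol{e}_l$ by the structure relation (\ref{struct}), and sum over $j$. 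This yields
\[
\sum_{j=1}^n\boldsymbol{c}_j\boldsymbol{s}_j=\sum_{p=0}^{m-1}\left(\sum_{j=1}^n\sum_{k,r,l=0}^{m-1}\eta_{kr}^{\tilde{p}}\gamma_{kl}^{p}a_r^js_l^j\right)\boldsymbol{e}_p .
\]
Since $\{\boldsymbol{e}_p\}_{p=0}^{m-1}$ is a basis, vanishing of the left-hand side forces each bracketed real coefficient to vanish, and I would single out the coefficient of $\boldsymbol{e}_{\tilde{p}}$, where $\tilde{p}$ is the index for which $(\gamma_{lk}^{\tilde{p}})$ is non-degenerate by condition 2).

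The decisive point is to evaluate the inner sum $\sum_{k=0}^{m-1}\eta_{kr}^{\tilde{p}}\gamma_{kl}^{\tilde{p}}$ occurring in that coefficient. Here I would invoke commutativity of $\mathcal{A}$: because $\boldsymbol{e}_l\boldsymbol{e}_k=\boldsymbol{e}_k\boldsymbol{e}_l$, the structure constants satisfy $\gamma_{kl}^{\tilde{p}}=\gamma_{lk}^{\tilde{p}}$, so the matrix $(\gamma_{lk}^{\tilde{p}})$ is symmetric. Combining this symmetry with the defining inverse relation $\sum_{k}\gamma_{lk}^{\tilde{p}}\eta_{kr}^{\tilde{p}}=\delta_{lr}$ gives at once $\sum_{k}\eta_{kr}^{\tilde{p}}\gamma_{kl}^{\tilde{p}}=\sum_{k}\gamma_{lk}^{\tilde{p}}\eta_{kr}^{\tilde{p}}=\delta_{lr}$. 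Substituting this back collapses the coefficient of $\boldsymbol{e}_{\tilde{p}}$ to $\sum_{j=1}^n\sum_{r,l=0}^{m-1}a_r^js_l^j\delta_{lr}=\sum_{j=1}^n\sum_{l=0}^{m-1}a_l^js_l^j$, which is exactly the left-hand side of the equation defining $\Pi_{\mathbb{R}}$ in (\ref{eque3}). Its vanishing therefore establishes $\boldsymbol{s}\in\Pi_{\mathbb{R}}$, as required.

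For $\Pi_{\mathcal{A}}$ to be a genuine hyperplane I would also record that $(\boldsymbol{c}_1,\ldots,\boldsymbol{c}_n)\ne\boldsymbol{0}$: for each $j$ the $\boldsymbol{e}_k$-component of $\boldsymbol{c}_j$ is $\sum_{l}\eta_{kl}^{\tilde{p}}a_l^j$, i.e.\ the image of the real vector $(a_0^j,\ldots,a_{m-1}^j)$ under the invertible matrix $(\eta_{kl}^{\tilde{p}})$, so $\boldsymbol{c}_j=\boldsymbol{0}$ only if all $a_l^j=0$; since $(a_0^1,\ldots,a_{(m-1)}^n)\ne 0$ by hypothesis, at least one $\boldsymbol{c}_j$ is nonzero. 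I expect the only genuine subtlety to be the index bookkeeping in the expansion, together with the recognition that condition 2) by itself is \emph{not} what makes the argument close: commutativity is precisely what renders $(\gamma_{lk}^{\tilde{p}})$ symmetric and thereby guarantees the clean identity $\sum_{k}\eta_{kr}^{\tilde{p}}\gamma_{kl}^{\tilde{p}}=\delta_{lr}$, which would otherwise be an entry of $(\gamma^{\tilde{p}})^{-T}\gamma^{\tilde{p}}$ and need not reduce to the Kronecker delta.
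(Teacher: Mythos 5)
Your proposal is correct and follows essentially the same route as the paper: substitute the prescribed $\boldsymbol{c}_j$ into $\sum_j\boldsymbol{c}_j\boldsymbol{s}_j$, expand via the structure constants, use linear independence of $\{\boldsymbol{e}_p\}$ to extract the real equation for $p=\tilde{p}$, and collapse it to $\sum_j\sum_l a_l^js_l^j=0$ by the inverse relation between $(\eta_{kl}^{\tilde{p}})$ and $(\gamma_{lk}^{\tilde{p}})$ (the paper organizes this by first writing $a_l^j=\sum_k\gamma_{lk}^{\tilde{p}}c_k^j$, which is the same algebra). Your two additions --- the explicit check that $(\boldsymbol{c}_1,\ldots,\boldsymbol{c}_n)\ne\boldsymbol{0}$ and the observation that commutativity (symmetry of $(\gamma_{lk}^{\tilde{p}})$) is what makes the Kronecker-delta identity valid --- are correct refinements of points the paper leaves implicit.
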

\begin{proof}[Proof.]
Consider the structure constants $\gamma_{lk}^p$ of $\mathcal{A}$ (\ref{struct}).
Suppose $\mathrm{det}\,(\gamma_{lk}^{\tilde{p}})\ne 0$ for some $p=\tilde{p}$. Let arbitrary real numbers $a^j_{l}$, $l=\overline{0,m-1}$,  $j=\overline{1,n}$, not equal to zero simultaneously and defining $\Pi_{\mathbb{R}}$ be given.
Let
\begin{equation}\label{eque4}
c^j_{k}=\sum\limits_{l=0}^{m-1}\eta_{kl}^{\tilde{p}}a^j_{l},\quad   k=\overline{0,m-1},\,\, j=\overline{1,n},
\end{equation}
where $(\eta_{kl}^{\tilde{p}})=(\gamma_{lk}^{\tilde{p}})^{-1}$.
Hence
$$
a^j_{l}=\sum\limits_{k=0}^{m-1}\gamma_{lk}^{\tilde{p}}c^j_{k},\quad  l=\overline{0,m-1}, j=\overline{1,n}.
$$
Let
\begin{equation}\label{eque1}
\boldsymbol{c}_j=\sum\limits_{k=0}^{m-1}c^j_{k}\boldsymbol{e}_k=\sum\limits_{k,l=0}^{m-1}\eta_{kl}^{\tilde{p}}a^j_{l}\boldsymbol{e}_k, \,\, j=\overline{1,n}.
\end{equation}
Substitute the values of $\boldsymbol{c}_j$ from the first equality of (\ref{eque1}) in the equation of the hyperplane (\ref{eque2}) considering (\ref{struct})
\begin{multline*}
\sum\limits_{j=1}^n\boldsymbol{c}_j\boldsymbol{s}_j=\sum\limits_{j=1}^n\sum\limits_{k=0}^{m-1}c^j_{k}\boldsymbol{e}_k\sum\limits_{l=0}^{m-1}s^j_{l}\boldsymbol{e}_l=\sum\limits_{j=1}^n\sum\limits_{k,l=0}^{m-1}
c^j_{k}s^j_{l}\boldsymbol{e}_k\boldsymbol{e}_l=\\=\sum\limits_{j=1}^n\sum\limits_{k,l=0}^{m-1}c^j_{k}s^j_{l}\sum\limits_{p=0}^{m-1}\gamma_{lk}^p\boldsymbol{e}_p=\sum\limits_{p=0}^{m-1}\sum\limits_{j=1}^n\sum\limits_{k,l=0}^{m-1}\gamma_{lk}^pc^j_{k}s^j_{l}\boldsymbol{e}_p=\boldsymbol{0}.
\end{multline*}
The last equation is equivalent to simultaneous real equations
$$
\sum\limits_{j=1}^n\sum\limits_{k,l=0}^{m-1}\gamma_{lk}^pc^j_{k}s^j_{l}=0,\,\, p=\overline{0,m-1}.
$$
In particular, for $p=\tilde{p}$ we obtain
$$
\sum\limits_{j=1}^n\sum\limits_{k,l=0}^{m-1}\gamma_{lk}^{\tilde{p}}c^j_{k}s^j_{l}=\sum\limits_{j=1}^n\sum\limits_{l=0}^{m-1}a^j_{l}s^j_{l}=0.
$$
Thus,  any vector $\boldsymbol{s}=(\boldsymbol{s}_1,\boldsymbol{s}_2,\ldots,\boldsymbol{s}_n)\in \mathcal{A}^n$ satisfying equation (\ref{eque2}), where constants $\boldsymbol{c}_j$, $j=\overline{1,n}$, are defined by (\ref{eque1}), satisfies equation in (\ref{eque3}).
\end{proof}

Let $\Omega$ be a domain in the space $\mathcal{A}^n$. A domain  is an open connected set in $\mathbb{R}^{mn}$.
\begin{definition}
A domain $\Omega\subset \mathcal{A}^n$ is said to be \emph{\textbf{
locally $\mathcal{A}$-linearly convex}},   if for every boundary point
$\boldsymbol{w}=(\boldsymbol{w}_1,\boldsymbol{w}_2,\ldots, \boldsymbol{w}_n)\in\partial\Omega$ there is a hyperplane $\Pi_{\mathcal{A}}$ (\ref{eque2}), where $\boldsymbol{s}_j=\boldsymbol{z}_j-\boldsymbol{w}_j$,  $\boldsymbol{z}_j\in\mathcal{A}$, $j=\overline{1,n}$, not intersecting $\Omega$ in some neighborhood of the point $\boldsymbol{w}$. The hyperplane $\Pi_{\mathcal{A}}$ is called \emph{\textbf{
locally supporting}} for $\Omega$ at $\boldsymbol{w}$.
\end{definition}

It is obvious that the notion of $\mathbb{C}$-linear convexity is equivalent to the notion of linear convexity.

Now we consider domain
$\Omega=\{\boldsymbol{z}\in \mathcal{A}^n:\rho(\boldsymbol{z})=\rho(\boldsymbol{z},\boldsymbol{z}^{\boldsymbol{1}},\ldots,\boldsymbol{z}^{\boldsymbol{m-1}})<0\}$,
$\boldsymbol{z}^{\boldsymbol{l}}=(\boldsymbol{z}^{\boldsymbol{l}}_1,\boldsymbol{z}^{\boldsymbol{l}}_1,\boldsymbol{z}^{\boldsymbol{l}}_2\ldots \boldsymbol{z}^{\boldsymbol{l}}_n)$, $l=\overline{1,m-1}$, with the boundary
$\partial\Omega =\{\boldsymbol{z}\in \mathcal{A}^n:\rho(\boldsymbol{z})=0\}$, where function $\rho:
\mathcal{A}^n\to\mathbb{R}$ is twice continuously differentiable in a neighborhood of $\partial \Omega$ with respect to its real variables and such that $\mathrm{grad}\rho\ne
0$ everywhere on $\partial\Omega$.

Let $\boldsymbol{w}\in\partial\Omega$, $\boldsymbol{z}_j\in\mathcal{A}$, $\boldsymbol{s}_j=\boldsymbol{z}_j-\boldsymbol{w}_j$,  $j=\overline{1,n}$. We say that vector
$\boldsymbol{s}=(\boldsymbol{s}_1,\boldsymbol{s}_2,\ldots ,\boldsymbol{s}_n)\in \mathcal{A}^n$  \emph{\textbf{ belongs to the tangent hyperplane}}
 $T_{\mathcal{A}}(\boldsymbol{w})$  to $\Omega$ at the point $\boldsymbol{w}$ if
$$
\sum\limits_{j=1}^n\sum\limits_{k,l=0}^{m-1}\eta_{kl}^{\tilde{p}}\frac{\partial\rho(w)}{\partial x^j_l}\boldsymbol{e}_k\,\boldsymbol{s}_j=\boldsymbol{0}.
$$
Thus, if $\boldsymbol{s}\in T_{\mathcal{A}}(\boldsymbol{w})$, then by Lemma \ref{realhyper} and considering (\ref{L1}),
\begin{equation}\label{Re}
\sum\limits_{j=1}^n
\sum\limits_{l=0}^{m-1}\frac{\partial\rho(w)}{\partial
x^j_l}\,s^j_{l}=\sum\limits_{j=1}^n\sum\limits_{l=0}^{m-1}\frac{\partial\rho(\boldsymbol{w})}{\partial
\boldsymbol{z}_j^{\boldsymbol{l}}}\,\boldsymbol{s}_j^{\boldsymbol{l}}=0.
\end{equation}

\begin{theorem}\label{clif} If domain $\Omega$ is locally $\mathcal{A}$-linearly convex and $T_{\mathcal{A}}(\boldsymbol{w})$ is locally supporting for $\Omega$ at any point $\boldsymbol{w}\in\partial\Omega$,  then for any point $\boldsymbol{w}$ and any vector $\boldsymbol{s}\in T_{\mathcal{A}}(\boldsymbol{w})$, $\|\boldsymbol{s}\|=1$, the following inequality is true
\begin{equation}\label{avtoref6}
\sum\limits_{i,j=1}^n\sum\limits_{k,l=0}^{m-1}
\frac{\partial^2\rho(\boldsymbol{w})}{{\partial \boldsymbol{z}^{\boldsymbol{k}}_i\partial
\boldsymbol{z}^{\boldsymbol{l}}_j}}\boldsymbol{s}^{\boldsymbol{l}}_j\boldsymbol{s}^{\boldsymbol{k}}_i\ge 0.
\end{equation}
If for any point  $\boldsymbol{w}\in
\partial\Omega$ and any vector $\boldsymbol{s}\in T_{\mathcal{A}}(\boldsymbol{w})$, $\|\boldsymbol{s}\|=1$,
\begin{equation}\label{4}
\sum\limits_{i,j=1}^n\sum\limits_{k,l=0}^{m-1}
\frac{\partial^2\rho(\boldsymbol{w})}{{\partial \boldsymbol{z}^{\boldsymbol{k}}_i\partial
\boldsymbol{z}^{\boldsymbol{l}}_j}}\boldsymbol{s}^{\boldsymbol{l}}_j\boldsymbol{s}^{\boldsymbol{k}}_i> 0,
\end{equation}
then domain $\Omega$ is locally $\mathcal{A}$-linearly convex.
\end{theorem}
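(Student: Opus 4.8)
The plan is to reduce the statement to the classical second-order characterization of a one-sided local extremum, using the dictionary between the real picture on $\mathbb{R}^{mn}$ and the algebra picture on $\mathcal{A}^n$ that has already been set up. I would lean on two facts. First, the set $T_{\mathcal{A}}(\boldsymbol{w})$ is an $\mathbb{R}$-linear subspace of $\mathbb{R}^{mn}$ contained in the real tangent hyperplane $\ker d\rho(w)$: this is exactly what Lemma~\ref{realhyper} together with (\ref{Re}) gives, since every $\boldsymbol{s}\in T_{\mathcal{A}}(\boldsymbol{w})$ satisfies $\sum_{j,l}\frac{\partial\rho(w)}{\partial x_l^j}s_l^j=0$. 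Second, the $\mathcal{A}$-valued expression in (\ref{avtoref6}) is in fact a real scalar equal to the real Hessian form: substituting the second formal derivatives into the quadratic-form identity $\sum a_{lk}^{ji}x_l^j x_k^i=\sum \boldsymbol{a}_{ji}^{pq}\boldsymbol{z}_i^{\boldsymbol{q}}\boldsymbol{z}_j^{\boldsymbol{p}}$ derived in Section 2 yields $\sum_{i,j}\sum_{k,l}\frac{\partial^2\rho(\boldsymbol{w})}{\partial\boldsymbol{z}_i^{\boldsymbol{k}}\partial\boldsymbol{z}_j^{\boldsymbol{l}}}\boldsymbol{s}_j^{\boldsymbol{l}}\boldsymbol{s}_i^{\boldsymbol{k}}=\sum_{i,j}\sum_{k,l}\frac{\partial^2\rho(w)}{\partial x_l^j\partial x_k^i}s_l^j s_k^i$, whose right-hand side is manifestly in $\mathbb{R}$. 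Thus (\ref{avtoref6}) and (\ref{4}) are genuine real inequalities about the restriction of the Hessian of $\rho$ to the subspace $T_{\mathcal{A}}(\boldsymbol{w})$.

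For necessity, I would fix $\boldsymbol{w}\in\partial\Omega$ and a unit vector $\boldsymbol{s}\in T_{\mathcal{A}}(\boldsymbol{w})$, and study the real function $g(t)=\rho(\boldsymbol{w}+t\boldsymbol{s})$ for small real $t$. Because $T_{\mathcal{A}}(\boldsymbol{w})$ is a linear subspace, the whole segment $\boldsymbol{w}+t\boldsymbol{s}$ lies on the affine tangent hyperplane, which by hypothesis is locally supporting; hence $g(t)\ge 0=g(0)$ near $t=0$, so $t=0$ is a local minimum. The chain rule gives $g'(0)=\sum_{j,l}\frac{\partial\rho(w)}{\partial x_l^j}s_l^j=0$ (consistent with (\ref{Re})) and $g''(0)=\sum_{i,j}\sum_{k,l}\frac{\partial^2\rho(w)}{\partial x_l^j\partial x_k^i}s_l^j s_k^i\ge 0$; rewriting $g''(0)$ through the identity above turns this into (\ref{avtoref6}).

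For sufficiency, I would show that $T_{\mathcal{A}}(\boldsymbol{w})$ itself serves as a locally supporting hyperplane. For $\boldsymbol{s}\in T_{\mathcal{A}}(\boldsymbol{w})$ the second-order Taylor expansion at $\boldsymbol{w}$ reads $\rho(\boldsymbol{w}+\boldsymbol{s})=\tfrac12\sum_{i,j}\sum_{k,l}\frac{\partial^2\rho(w)}{\partial x_l^j\partial x_k^i}s_l^j s_k^i+o(\|\boldsymbol{s}\|^2)$, since the constant and first-order terms vanish by $\rho(\boldsymbol{w})=0$ and (\ref{Re}). Assumption (\ref{4}) makes the quadratic part strictly positive on the unit sphere of $T_{\mathcal{A}}(\boldsymbol{w})$; by compactness of that sphere it is bounded below by some $\lambda>0$, so $\rho(\boldsymbol{w}+\boldsymbol{s})\ge \tfrac{\lambda}{2}\|\boldsymbol{s}\|^2+o(\|\boldsymbol{s}\|^2)>0$ for all small nonzero $\boldsymbol{s}\in T_{\mathcal{A}}(\boldsymbol{w})$. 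Hence the affine tangent hyperplane meets $\Omega=\{\rho<0\}$ only possibly at $\boldsymbol{w}$ within a neighborhood, i.e. it is locally supporting, so $\Omega$ is locally $\mathcal{A}$-linearly convex.

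I expect the analytic steps (Taylor expansion, the one-variable extremum test, the compactness argument for uniform positivity) to be unproblematic. The genuine point to get right — the step I would treat most carefully — is the bookkeeping that identifies the formal $\mathcal{A}$-quadratic differential form with the real Hessian quadratic form, including checking that under the substitution $x_l^j\mapsto s_l^j$ the associated $\mathcal{A}$-numbers become exactly $\boldsymbol{s}_j^{\boldsymbol{l}}$, so that (\ref{avtoref6}) and (\ref{4}) really are the real inequalities produced by $g''(0)$ and by the Taylor remainder. This relies essentially on the invertibility of the basis and on condition 2), which are what make Lemma~\ref{realhyper} and the change of variables (\ref{x}) available.
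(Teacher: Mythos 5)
Your proof is correct and follows essentially the same route as the paper: both arguments rest on identifying the formal $\mathcal{A}$-quadratic form with the real Hessian form via the Section 2 identity, and on the second-order Taylor expansion of $\rho$ restricted to the affine tangent hyperplane $T_{\mathcal{A}}(\boldsymbol{w})$, whose constant and linear terms vanish by $\rho(\boldsymbol{w})=0$ and (\ref{Re}). The only differences are cosmetic: your necessity step is the direct one-variable second-derivative test $g(t)=\rho(\boldsymbol{w}+t\boldsymbol{s})$ where the paper argues by contradiction with a tangent point realizing a negative value, and you make explicit the compactness/uniform-positivity step in sufficiency that the paper leaves implicit.
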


\begin{proof}[Proof. Sufficiency]  Formally write the Taylor series for the function $\rho(\boldsymbol{z})=\rho(\boldsymbol{z}^{\boldsymbol{0}},\boldsymbol{z}^{\boldsymbol{1}},\ldots,\boldsymbol{z}^{\boldsymbol{m-1}})$, $\boldsymbol{z}^{\boldsymbol{l}}=(\boldsymbol{z}^{\boldsymbol{l}}_1,\boldsymbol{z}^{\boldsymbol{l}}_1,\boldsymbol{z}^{\boldsymbol{l}}_2\ldots \boldsymbol{z}^{\boldsymbol{l}}_n)$, $l=\overline{0,m-1}$, with respect to variables $\boldsymbol{z}^{\boldsymbol{l}}_j$ in the neighborhood $U(\boldsymbol{w})$ of any point $\boldsymbol{w}\in\partial\Omega$:
\begin{multline*}
\rho(\boldsymbol{z})=\rho(\boldsymbol{{w}})+\sum\limits_{j=1}^n\sum\limits_{k=0}^{m-1}\frac{\partial\rho(\boldsymbol{w})}{\partial
\boldsymbol{z}_j^{\boldsymbol{k}}}\,(\boldsymbol{z}_j^{\boldsymbol{k}}-\boldsymbol{w}_j^{\boldsymbol{k}}) +\\
+\frac{1}{2}\sum\limits_{i,j=1}^n\sum\limits_{k,l=0}^{m-1}\frac{\partial^2\rho(\boldsymbol{w})}{{\partial
\boldsymbol{z}_i^{\boldsymbol{k}}\partial
\boldsymbol{z}_j^{\boldsymbol{l}}}}\,(\boldsymbol{z}_j^{\boldsymbol{l}}-\boldsymbol{w}_j^{\boldsymbol{l}})
(\boldsymbol{z}_i^{\boldsymbol{k}}-\boldsymbol{w}_i^{\boldsymbol{k}})+o(\|\boldsymbol{z}-\boldsymbol{w}\|^2),\qquad \boldsymbol{z}\to \boldsymbol{w}.
\end{multline*}

Since $\rho(\boldsymbol{w})=0$ at any boundary point $\boldsymbol{w}$ and considering condition (\ref{Re}),
we get
\begin{multline}\label{6}
\rho(\boldsymbol{z})=\frac{1}{2}\left(\sum\limits_{i,j=1}^n\sum\limits_{k,l=0}^{m-1}
\frac{\partial^2\rho({\boldsymbol{w}})}{{\partial \boldsymbol{z}_i^{\boldsymbol{k}}\partial
\boldsymbol{z}_j^{\boldsymbol{l}}}}\,\frac{(\boldsymbol{z}_j^{\boldsymbol{l}}-\boldsymbol{w}_j^{\boldsymbol{l}})
(\boldsymbol{z}_i^{\boldsymbol{k}}-\boldsymbol{w}_i^{\boldsymbol{k}})}{\|\boldsymbol{z}-\boldsymbol{w}\|^2}\right)\|\boldsymbol{z}-\boldsymbol{w}\|^2+\\
+o(\|\boldsymbol{z}-\boldsymbol{w}\|^2), \,\, \boldsymbol{z}\to \boldsymbol{w},
\end{multline}
for any point  $\boldsymbol{z}\in U(\boldsymbol{w})\cap T_{\mathcal{A}}(\boldsymbol{w})$.

Thus,  $\rho(\boldsymbol{z})\ge 0$ for any point $\boldsymbol{z}\in U(\boldsymbol{w})\cap
T_{\mathcal{A}}(\boldsymbol{w})$ and any point $\boldsymbol{w}\in\partial\Omega$  by  (\ref{4}) and (\ref{6}), which means local $\mathcal{A}$-linear convexity of domain  $\Omega$.

\vskip 1mm {\it Necessity.} Let domain $\Omega$ be locally $\mathcal{A}$-linearly convex and for a point
$\widetilde{\boldsymbol{w}}=(\widetilde{\boldsymbol{w}}_1,\widetilde{\boldsymbol{w}}_2,\ldots ,\widetilde{\boldsymbol{w}}_n)\in\partial\Omega$ and for a vector  $\boldsymbol{t}=(\boldsymbol{t}_1,\boldsymbol{t}_2,\ldots ,\boldsymbol{t}_n)\in T_{\mathcal{A}}(\widetilde{\boldsymbol{w}})$ the following inequality is true
\begin{equation} \label{5}
\sum\limits_{i,j=1}^n\sum\limits_{k,l=0}^{m-1}
\frac{\partial^2\rho(\widetilde{\boldsymbol{w}})}{{\partial \boldsymbol{z}^{\boldsymbol{k}}_i\partial
\boldsymbol{z}^{\boldsymbol{l}}_j}}\boldsymbol{t}^{\boldsymbol{l}}_j\boldsymbol{t}^{\boldsymbol{k}}_i< 0.
\end{equation}
On the other hand, for points $\boldsymbol{z}\in U(\widetilde{\boldsymbol{w}})\cap T_{\mathcal{A}}(\widetilde{\boldsymbol{w}})$
the expansion (\ref{6}) is valid. Thus, for the point
$\widetilde{\boldsymbol{z}}=(\widetilde{\boldsymbol{z}}_1,\widetilde{\boldsymbol{z}}_2,\ldots ,\widetilde{\boldsymbol{z}}_n)\in U(\widetilde{\boldsymbol{w}})\cap
T_{\mathcal{A}}(\widetilde{\boldsymbol{w}})$, which corresponds to the tangent vector $\boldsymbol{t}$, where
correspondence  is defined by the relation
$\boldsymbol{t}_i=(\widetilde{\boldsymbol{z}}_i-\widetilde{\boldsymbol{w}}_i)/\|\widetilde{\boldsymbol{z}}-\widetilde{\boldsymbol{w}}\|$, $i=\overline{1,n}$, the inequality $\rho(\widetilde{\boldsymbol{z}})<0$ is true by (\ref{5}), which contradicts the fact that hyperplane $T_{\mathcal{A}}(\widetilde{\boldsymbol{w}})$ is locally supporting for $\Omega$ at $\widetilde{\boldsymbol{w}}$.
\end{proof}
\newpage

\end{document}